\newtheorem{theorem}{Theorem}
\newtheorem{proposition}{Proposition}
\begin{document}

\title{On the rate of convergence of $\mathbb{R}^d$-ergodic averages \\ constructed over a strictly convex set}       

\author{Ivan \uppercase{Podvigin}}             


\maketitle%

{{\bf Abstract:} For $\mathbb{R}^d$-ergodic means constructed over a strictly convex set, a spectral criterion for homogeneous rates of convergence is obtained. From Hertz's result on the asymptotics of the Fourier transform of the indicator of a strictly convex set, it follows that the rate of convergence does not depend on the specific form of such a set.}

{{\bf Keywords:} Rates of convergence in ergodic theorems, Fourier transform, homogeneous functions}        

{{\bf MSC:} Primary 37A30; Secondary 42B10, 37A15}      

\section{Introduction} 
The paper is devoted to the study of convergence rates in the statistical ergodic theorem.
It is well known that these rates are determined by a singularity of the spectral measure in the neighborhood of zero (see reviews~\cite{Ka96, KaPo16}, as well as recent works~\cite{Tem22, Bras24}). Recently, averaging with faster convergence rates than in classical ergodic theorems has also attracted interest (see,  e.g.,~\cite{DY18,DM23,TongLi24,P24Ufa}).

Let $\mathcal{H}$ be a Hilbert space on which the group~${\mathbb{R}^d}$ acts by unitary transformations~${U_{\bf t}},$ ${{\bf t}\in\mathbb{R}^d}$. Let ${\mathcal{K}\subset\mathbb{R}^d}$ be a set of finite positive Lebesgue measure (we denote it by~${\mathcal{L}_d}$), i.e., ${0<\mathcal{L}_d(\mathcal{K})<\infty}.$ The homothety generated by the vector~${{\bf t}\in\mathbb{R}^d}$ will be denoted as
$$
{\bf x}\mapsto {\bf x}\odot{\bf t}:=(x_1t_1,x_2t_2,\dots,x_dt_d),\ \ {\bf x}\in\mathbb{R}^d.
$$
For ${{\bf t}\in\mathbb{R}^d}$, we set $\mathcal{K}_{\bf t}=\mathcal{K}\odot {\bf t}.$ Clearly, ${\mathcal{L}_d(\mathcal{K}_{\bf t})=\mathcal{L}_d(\mathcal{K})t_1t_2\cdots t_d}.$ The statistical ergodic theorem states (see, e.g.,~\cite{Tem92}) that for any vector~${h\in\mathcal{H}}$
$$
\left\|\frac{1}{\mathcal{L}_d(\mathcal{K}_{\bf t})}\int_{\mathcal{K}_{\bf t}}U_{\bf s}h\,d\mathcal{L}_d({\bf s})-Ph\right\|_{\mathcal{H}}\to0
$$
for ${t_1,...,t_d\to\infty},$ where $P$ is the orthogonal projection onto the subspace of fixed vectors of the group $\{U_{\bf t}\}_{{\bf t}\in\mathbb{R}^d}.$

Of natural interest is the problem of the rate of convergence in this statement. In recent papers~\cite{Po23} and~\cite{KPTKh24, Po24}, power-law estimates of the convergence rate in this theorem have been obtained from the power-law singularity of the spectral measure at zero for two different cases: 1) the set ${\mathcal{K}=B({\bf0},1)}$ is a unit ball centered at zero, and the vector ${{\bf t}\in\mathbb{R}^d}$ goes to infinity, having all the same coordinates; and 2) the set ${\mathcal{K}=[0,1]^d}$ is a standard unit cube, and the vector~${{\bf t}\in\mathbb{R}^d}$ goes to infinity having coordinates that differ from each other. A significant difference for obtaining estimates of the convergence rate of ergodic averages in these two cases is the asymptotic behavior at infinity of the Fourier transform for the indicator of the set $\mathcal{K}$. The influence of this asymptotic behavior on the convergence rate in the statistical ergodic theorem can be seen thanks to the formula (see, e.g.,~\cite{BE74}):
\begin{equation}\label{eq_Norm}
\left\|\frac{1}{\mathcal{L}_d(\mathcal{K}_{\bf t})}\int_{\mathcal{K}_{\bf t}}U_{\bf s}h\,d\mathcal{L}_d({\bf s})-Ph\right\|^2_{\mathcal{H}}=\int_{\mathbb{R}^d}\left|\frac{1}{\mathcal{L}_d(\mathcal{K}_{\bf t})}\mathscr{F}[I_{\mathcal{K}_{\bf t}}]({\bf x})\right|^2d\sigma_{h-Ph}(\bf x),
\end{equation}
where ${\mathscr{F}[I_{\mathcal{K}_{\bf t}}]({\bf x})=\int_{{\mathcal{K}_{\bf t}}}e^{i({\bf y},{\bf x})}\,d\mathcal{L}_d(\bf y)}$ is the Fourier transform of~${I_{\mathcal{K}_{\bf t}}},$ ${({\bf x},{\bf y})=\sum\limits_{k=1}^dx_ky_k}$ is the standard inner product in $\mathbb{R}^d,$ and $\sigma_{h-Ph}$ is the spectral measure for the vector $h-Ph$ (see, e.g.,~\cite{F95}); in addition ${\sigma_{h-Ph}(\{{\bf 0}\})=0}.$ Considering that
\begin{multline*}
\frac{1}{\mathcal{L}_d(\mathcal{K}_{\bf t})}\mathscr{F}[I_{\mathcal{K}_{\bf t}}]({\bf x})=\frac{1}{\mathcal{L}_d(\mathcal{K}\odot{\bf t})}\int_{\mathcal{K}}e^{i({\bf x},{\bf y}\odot{\bf t})}d\mathcal{L}_d({\bf y}\odot{\bf t})=\\
=\frac{1}{\mathcal{L}_d(\mathcal{K})}\int_{\mathcal{K}}e^{i({\bf x}\odot{\bf t},{\bf y})}d\mathcal{L}_d({\bf y})=\frac{1}{\mathcal{L}_d(\mathcal{K})}\mathscr{F}[I_{\mathcal{K}}]({\bf x}\odot{\bf t}),
\end{multline*}
the integral we are interested in on the right side of formula~\eqref{eq_Norm} is written as
\begin{equation}\label{eq_StudInegral}
I_{\mathcal{K}}({\bf t})=\int_{\mathbb{R}^d}\left|\frac{1}{\mathcal{L}_d(\mathcal{K})}\mathscr{F}[I_{\mathcal{K}}]({\bf x}\odot{\bf t})\right|^2d\sigma_{h-Ph}(\bf x).
\end{equation}

In this paper we extend the results of~\cite{Po23}. Namely, we assume that the set $\mathcal{K}$ is convex, compact, and has a sufficiently smooth boundary~${\partial\mathcal{K}}$ with everywhere positive total (Gaussian) curvature, which we denote by~$\kappa.$ The asymptotics as ${|{\bf x}|:=\sqrt{({\bf x},{\bf x})}\to\infty}$ of the Fourier transform ${\mathscr{F}[I_{\mathcal{K}}]({\bf x})}$ of such a set is well studied (see, for example, the monographs~\cite[Chapter~VIII]{S93} and \cite[\S 2.3]{IL14} and the references therein). Namely, let the boundary $\partial\mathcal{K}$ belong to the smoothness class~$C^k$ for ${k>\max\left\{1,\frac{d-1}{2}\right\}}$, then there exists a constant ${D=D(\mathcal{K})>0}$ such that
\begin{equation}\label{eq_EstimationforFourier}
|\mathscr{F}[I_{\mathcal{K}}]({\bf x})|\leq D|{\bf x}|^{-\frac{d+1}{2}}.
\end{equation}
Moreover, the asymptotic equality proved by Hertz holds (see the more general result in~\cite[Theorem~2.29]{IL14}):
\begin{equation}\label{eq_AsimpforFourier}
\mathscr{F}[I_{\mathcal{K}}]({\bf x})=\mathcal{D}({\bf x})|{\bf x}|^{-\frac{d+1}{2}}+o(|{\bf x}|^{-\frac{d+1}{2}})
\end{equation}
as $|{\bf x}|\to\infty,$ where $\mathcal{D}({\bf x})$ is a bounded function of the following form.
We represent a nonzero vector ${{\bf x}\in\mathbb{R}^d}$ as ${{\bf x}=|{\bf x}|\bm\eta},$ where $\bm\eta$ is a unit vector specifying the direction. Denote by ${{\bf x}^{\pm}={\bf x}^{\pm}(\bm\eta)}$ the uniquely determined extremal points on the boundary~$\partial\mathcal{K}:$
$$
({\bf x}^+,\bm\eta)=\max_{{\bf y}\in\partial\mathcal{K}}({\bf y},\bm\eta),\ \ \ \
({\bf x}^-,\bm\eta)=\min_{{\bf y}\in\partial\mathcal{K}}({\bf y},\bm\eta);
$$
then
\begin{equation}\label{eq_Herz}
\mathcal{D}({\bf x})=\frac{1}{2\pi}\left(\frac{e^{2\pi i\left(|{\bf x}|({\bf x}^+,\bm\eta)+\frac{d+1}{8}\right)}}{\sqrt{\kappa({\bf x}^+)}}+\frac{e^{2\pi i\left(|{\bf x}|({\bf x}^-,\bm\eta)-\frac{d+1}{8}\right)}}{\sqrt{\kappa({\bf x}^-)}}\right).
\end{equation}

\section{Main results}

The power asymptotics~\eqref{eq_AsimpforFourier} of the Fourier transform leads to the maximum power with exponent $d+1$ decay rates to zero for the integral~${I_\mathcal{K}({\bf t})}$. In addition to the maximum power rates, we will find conditions on arbitrary power rates and even for a wider class of homogeneous functions.

Let $X$ be a cone set in ${\mathbb{R}^d},$ i.e., for every ${{\bf x}\in X}$ and every ${r>0}$ the point ${r{\bf x}\in X}.$ Recall that the mapping ${f:X\to\mathbb{R}}$ is called \textit{homogeneous of degree}~$\theta\in\mathbb{R}$ if for every $r>0$ and for every ${{\bf x}\in X}$ the equality ${f(r{\bf x})=r^\theta f({\bf x})}$ holds. Denote also the unit sphere in $X$ as
$$
S_X=\{{\bf x}\in X: |{\bf x}|=1\}.
$$
It is clear that for a homogeneous function it is enough to know its values on $S_X.$

The first main result of this paper is a criterion for homogeneous rates of convergence of the integral~${I_{\mathcal{K}}(\bf t)}$.
To formulate it, we consider neighborhoods of zero in $\mathbb{R}^d$ in the form of ellipsoids. For ${{\bm \delta}\in\mathbb{R}^d},$ ${{\bm \delta}>\bf0},$ put ${\bm \delta}^{-1}=(\delta_1^{-1},\dots,\delta_d^{-1})$ and
$$
\mathcal{E}({\bm \delta})=\{{\bf x}\in\mathbb{R}^d:\ \ |{\bf x}\odot{\bm \delta}^{-1}|<1\}=\left\{{\bf x}\in\mathbb{R}^d:\ \frac{x_1^2}{\delta_1^2}+\cdots+\frac{x_d^2}{\delta_d^2}<1\right\}.
$$

\begin{theorem}\label{Th_Criterion1} Let ${\varphi:\mathbb{R}^d_+\to\mathbb{R}_+}$ be a homogeneous function of degree ${\theta>-(d+1)}.$ Then, for ${t_1,...,t_d\to\infty},$ the equivalence holds
	$$
	I_{\mathcal{K}}({\bf t})=\mathcal{O}(\varphi({\bf t}))\ \ \Leftrightarrow\ \ \sigma_{h-Ph}(\mathcal{E}({\bf t}^{-1}))=\mathcal{O}(\varphi({\bf t})).
	$$
\end{theorem}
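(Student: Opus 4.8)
The plan is to prove both implications by comparing the integral $I_{\mathcal{K}}({\bf t})$ with the spectral mass of the ellipsoidal neighborhood $\mathcal{E}({\bf t}^{-1})$, using the asymptotic~\eqref{eq_AsimpforFourier}--\eqref{eq_Herz} together with the upper bound~\eqref{eq_EstimationforFourier}. The key observation is that the kernel $K({\bf t},{\bf x}) := \left|\mathcal{L}_d(\mathcal{K})^{-1}\mathscr{F}[I_{\mathcal{K}}]({\bf x}\odot{\bf t})\right|^2$ behaves, up to bounded oscillating factors, like $|{\bf x}\odot{\bf t}|^{-(d+1)}$ for large argument, while near the origin it is close to $1$ (since $\mathscr{F}[I_{\mathcal{K}}]({\bf 0})=\mathcal{L}_d(\mathcal{K})$). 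So $K$ is comparable to $\min\{1,|{\bf x}\odot{\bf t}|^{-(d+1)}\}$, and on the region $|{\bf x}\odot{\bf t}|<1$ — which is exactly $\mathcal{E}({\bf t}^{-1})$ after noting $|{\bf x}\odot{\bf t}|=|{\bf x}\odot({\bf t}^{-1})^{-1}|$ — the kernel is bounded below by a positive constant; this immediately gives $\sigma_{h-Ph}(\mathcal{E}({\bf t}^{-1}))\lesssim I_{\mathcal{K}}({\bf t})$, hence the direction $I_{\mathcal{K}}({\bf t})=\mathcal{O}(\varphi({\bf t}))\Rightarrow\sigma_{h-Ph}(\mathcal{E}({\bf t}^{-1}))=\mathcal{O}(\varphi({\bf t}))$.

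For the converse, assume $\sigma_{h-Ph}(\mathcal{E}({\bf t}^{-1}))=\mathcal{O}(\varphi({\bf t}))$. I would split the integral in~\eqref{eq_StudInegral} into the dyadic-type shells $\mathcal{E}(2^{k+1}{\bf t}^{-1})\setminus\mathcal{E}(2^{k}{\bf t}^{-1})$ for $k\geq 0$, plus the core $\mathcal{E}({\bf t}^{-1})$. On the core, $K\leq 1$, contributing $\mathcal{O}(\sigma_{h-Ph}(\mathcal{E}({\bf t}^{-1})))=\mathcal{O}(\varphi({\bf t}))$. On the $k$-th shell, \eqref{eq_EstimationforFourier} gives $K({\bf t},{\bf x})\lesssim |{\bf x}\odot{\bf t}|^{-(d+1)}\lesssim 2^{-k(d+1)}$, while the spectral mass of the shell is at most $\sigma_{h-Ph}(\mathcal{E}(2^{k+1}{\bf t}^{-1}))=\sigma_{h-Ph}(\mathcal{E}((2^{-k-1}{\bf t})^{-1}))=\mathcal{O}(\varphi(2^{-k-1}{\bf t}))=\mathcal{O}(2^{-\theta(k+1)}\varphi({\bf t}))$ by homogeneity. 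Summing, the tail contributes $\lesssim \varphi({\bf t})\sum_{k\geq 0}2^{-k(d+1)}2^{-\theta k}$, and this geometric series converges precisely because $\theta>-(d+1)$. This is exactly where the hypothesis on $\theta$ is used. Combining core and tail yields $I_{\mathcal{K}}({\bf t})=\mathcal{O}(\varphi({\bf t}))$.

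The main technical obstacle is making the comparison $K({\bf t},{\bf x})\asymp\min\{1,|{\bf x}\odot{\bf t}|^{-(d+1)}\}$ fully rigorous on \emph{all} of $\mathbb{R}^d$, not just asymptotically: the upper bound~\eqref{eq_EstimationforFourier} holds for every ${\bf x}\neq{\bf 0}$ (after absorbing the behavior near zero, where $K$ is bounded by $1$, into the constant $D$ on any bounded region), so the upper-bound half is clean; the delicate part is the \emph{lower} bound on the core region $\mathcal{E}({\bf t}^{-1})$, where one must rule out that the continuous function $K({\bf t},{\bf x})$ dips to zero. Since $K({\bf t},{\bf x})=\left|\mathcal{L}_d(\mathcal{K})^{-1}\int_{\mathcal{K}}e^{i({\bf x}\odot{\bf t},{\bf y})}d\mathcal{L}_d({\bf y})\right|^2$ and $|{\bf x}\odot{\bf t}|<1$ on the core, the exponent has modulus bounded by $\mathrm{diam}(\mathcal{K})$, so $\mathrm{Re}\,e^{i({\bf x}\odot{\bf t},{\bf y})}\geq\cos(\mathrm{diam}(\mathcal{K}))$ whenever $\mathrm{diam}(\mathcal{K})<\pi/2$; in general one argues that $\mathcal{L}_d(\mathcal{K})^{-1}\mathscr{F}[I_{\mathcal{K}}]({\bf x}\odot{\bf t})\to 1$ uniformly as $|{\bf x}\odot{\bf t}|\to 0$, so on a possibly smaller ellipsoid $\mathcal{E}(c{\bf t}^{-1})$ (with $c$ a fixed constant depending only on $\mathcal{K}$) one has $K\geq 1/4$, and a harmless rescaling by the constant $c$ — again using homogeneity of $\varphi$ — absorbs this into the $\mathcal{O}$-notation. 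I would also remark that the two-sided bound on $K$ is what makes the statement an equivalence and shows, via Hertz's formula, that only the decay \emph{rate} $d+1$ and not the geometry of $\mathcal{K}$ enters.
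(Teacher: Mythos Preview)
Your argument is correct and the forward implication is essentially the paper's: the paper packages the lower bound on the kernel near the origin as Proposition~\ref{Pr1}, choosing a vector ${\bf a}$ (which can be taken diagonal, ${\bf a}=a(1,\dots,1)$) so that $\mathcal{L}_d(\mathcal{K})^{-1}|\mathscr{F}[I_{\mathcal{K}}]|\geq\varepsilon$ on $\mathcal{E}({\bf a})$, and then absorbs the rescaling via homogeneity exactly as you do with your constant $c$.

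For the converse the paper takes a different but equivalent route. Instead of a dyadic shell decomposition, it rewrites $I_{\mathcal{K}}({\bf t})$ via the distribution function,
\[
I_{\mathcal{K}}({\bf t})=2\int_0^1 u\,\sigma_h\!\left(\{{\bf x}:\mathcal{L}_d(\mathcal{K})^{-1}|\mathscr{F}[I_{\mathcal{K}}]({\bf x}\odot{\bf t})|>u\}\right)du,
\]
and then uses the pointwise bound~\eqref{eq_EstimationforFourier} to dominate the superlevel set by an ellipsoid $\mathcal{E}\bigl({\bf t}^{-1}(u\mathcal{L}_d(\mathcal{K})/D)^{-2/(d+1)}\bigr)$. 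Applying the hypothesis and homogeneity yields $I_{\mathcal{K}}({\bf t})\leq C\varphi({\bf t})\int_0^1 u^{1+2\theta/(d+1)}\,du$, and the finiteness of this integral is precisely the condition $\theta>-(d+1)$. Your geometric series $\sum_{k\geq0}2^{-k(d+1+\theta)}$ and the paper's integral $\int_0^1 u^{1+2\theta/(d+1)}\,du$ are the discrete and continuous versions of the same computation; the layer-cake approach has the minor advantage that it feeds directly into the borderline and supercritical cases (Proposition~\ref{Propos_Estimates}) by truncating the $u$-integral at $\varepsilon$, whereas your dyadic sum would need to be truncated at a finite $k$ to get the analogous logarithmic and saturated bounds.
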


The second result is a criterion for continuous homogeneous rates of degree ${\theta=-(d+1)}$ in the case where the vector~${{\bf t}\in\mathbb{R}^d}$ remains in the sector, i.e., there exists a number~${B\geq1}$ such that ${{\bf t}\in X_B},$
where
$$
X_B=\{{\bf t}\in\mathbb{R}^d: \ t_i\leq Bt_j\ \ \text{for all indices}\ \ 1\leq i,j\leq d\}.
$$
To emphasize the constant~$B$, we will also say that ${\bf t}$ remains in the $B$-sector.

\begin{theorem}\label{Th_Criterion2} Let ${\varphi:\mathbb{R}^d_+\to\mathbb{R}_+}$ be a continuous homogeneous function of degree ${\theta=-(d+1)}.$ Then, for ${t_1,...,t_d\to\infty}$ remaining in the sector, the equivalence holds
	$$
	I_{\mathcal{K}}({\bf t})=\mathcal{O}(\varphi({\bf t}))\ \ \Leftrightarrow\ \
	\int_{\mathbb{R}^d}\frac{d\sigma_{h-Ph}({\bf x})}{|{\bf x}|^{d+1}}<\infty.
	$$
\end{theorem}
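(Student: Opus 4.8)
\noindent\textbf{Plan of proof of Theorem~\ref{Th_Criterion2}.}
Write $\sigma:=\sigma_{h-Ph}$ and $\psi({\bf x}):=\mathcal L_d(\mathcal K)^{-1}\mathscr F[I_{\mathcal K}]({\bf x})$, so that $\psi({\bf 0})=1$, $|\psi|\le1$, and by~\eqref{eq_StudInegral} $I_{\mathcal K}({\bf t})=\int_{\mathbb{R}^d}|\psi({\bf x}\odot{\bf t})|^2\,d\sigma({\bf x})$. The plan is first to eliminate $\varphi$: the directions ${\bf t}/|{\bf t}|$ of positive vectors ${\bf t}\in X_B$ form a compact subset of the open orthant (each coordinate is $\ge(B\sqrt d)^{-1}$), so the continuous positive homogeneous function $\varphi$ of degree $-(d+1)$ satisfies $c_1|{\bf t}|^{-(d+1)}\le\varphi({\bf t})\le c_2|{\bf t}|^{-(d+1)}$ on $X_B$ (say for $\min_i t_i\ge1$), with $0<c_1\le c_2<\infty$. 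Hence it is enough to show, for $t_1,\dots,t_d\to\infty$ in the $B$-sector, that
\[
I_{\mathcal K}({\bf t})=\mathcal O\!\left(|{\bf t}|^{-(d+1)}\right)\iff J:=\int_{\mathbb{R}^d}|{\bf x}|^{-(d+1)}\,d\sigma({\bf x})<\infty .
\]
The implication $\Leftarrow$ is elementary and uses only~\eqref{eq_EstimationforFourier}, i.e. $|\psi({\bf y})|\le D'|{\bf y}|^{-(d+1)/2}$ for ${\bf y}\ne{\bf 0}$ with $D'=D/\mathcal L_d(\mathcal K)$: on $X_B$ one has $\min_i t_i\ge|{\bf t}|/(B\sqrt d)$, so $|{\bf x}\odot{\bf t}|\ge(B\sqrt d)^{-1}|{\bf t}|\,|{\bf x}|$, and integrating $|\psi({\bf x}\odot{\bf t})|^2\le D'^2(B\sqrt d)^{d+1}|{\bf t}|^{-(d+1)}|{\bf x}|^{-(d+1)}$ in $d\sigma$ gives $I_{\mathcal K}({\bf t})\le D'^2(B\sqrt d)^{d+1}J\,|{\bf t}|^{-(d+1)}$.

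For the converse I would argue by averaging in $t$. Since $(t,\dots,t)$ lies in the $1$-sector $\subseteq X_B$, the hypothesis gives $I_{\mathcal K}((t,\dots,t))=\int_{\mathbb{R}^d}|\psi(t{\bf x})|^2\,d\sigma({\bf x})=\mathcal O(t^{-(d+1)})$ as $t\to\infty$. One cannot bound $|\psi(t{\bf x})|^2$ from below pointwise in $t$ by $(t|{\bf x}|)^{-(d+1)}$, because by~\eqref{eq_Herz} the leading amplitude $\mathcal D(t{\bf x})$ oscillates in $t$ and vanishes along $t$ for directions $\bm\eta$ with $\kappa({\bf x}^+)=\kappa({\bf x}^-)$; so instead I multiply the estimate by $t^{d+1}$, integrate over $t\in[1,T]$, and apply Tonelli's theorem to obtain
\[
\frac1T\int_1^T I_{\mathcal K}((t,\dots,t))\,t^{d+1}\,dt=\int_{\mathbb{R}^d}F_T({\bf x})\,d\sigma({\bf x}),\qquad F_T({\bf x}):=\frac1T\int_1^T|\psi(t{\bf x})|^2\,t^{d+1}\,dt .
\]
Using $I_{\mathcal K}((t,\dots,t))\le\sigma(\mathbb{R}^d)$ near $t=1$ and the hypothesis $I_{\mathcal K}((t,\dots,t))\,t^{d+1}=\mathcal O(1)$ for large $t$, the left-hand side stays bounded as $T\to\infty$, so $\sup_{T\ge1}\int_{\mathbb{R}^d}F_T\,d\sigma<\infty$.

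It then remains to identify $\lim_{T\to\infty}F_T({\bf x})$ for each fixed ${\bf x}\ne{\bf 0}$. Writing $\psi({\bf y})=\mathcal L_d(\mathcal K)^{-1}\mathcal D({\bf y})|{\bf y}|^{-(d+1)/2}+\tilde r({\bf y})$ with $\tilde r({\bf y})=o(|{\bf y}|^{-(d+1)/2})$ from~\eqref{eq_AsimpforFourier}, the cross term and the $|\tilde r|^2$ term of $|\psi(t{\bf x})|^2\,t^{d+1}$ tend to $0$ as $t\to\infty$, and hence so do their Cesàro means; while by~\eqref{eq_Herz},
\[
|\mathcal D(t{\bf x})|^2=\frac1{4\pi^2}\!\left(\frac1{\kappa({\bf x}^+)}+\frac1{\kappa({\bf x}^-)}+\frac2{\sqrt{\kappa({\bf x}^+)\kappa({\bf x}^-)}}\cos\!\Big(2\pi\big(t|{\bf x}|({\bf x}^+-{\bf x}^-,\bm\eta)+\tfrac{d+1}4\big)\Big)\right)\!,
\]
and the cosine has strictly positive frequency $|{\bf x}|({\bf x}^+-{\bf x}^-,\bm\eta)$ (the width of $\mathcal K$ in the direction $\bm\eta$), so its Cesàro mean over $[1,T]$ is $\mathcal O((|{\bf x}|T)^{-1})\to0$. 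Thus $F_T({\bf x})\to g(\bm\eta)\,|{\bf x}|^{-(d+1)}$, where $g(\bm\eta)=\tfrac1{4\pi^2\mathcal L_d(\mathcal K)^2}\bigl(\kappa({\bf x}^+)^{-1}+\kappa({\bf x}^-)^{-1}\bigr)$ is homogeneous of degree $0$ and, $\kappa$ being continuous and positive on the compact set $\partial\mathcal K$, satisfies $g(\bm\eta)\ge c_3>0$. Fatou's lemma applied to $F_T\ge0$ along $T\to\infty$, together with $\sigma(\{{\bf 0}\})=0$, then yields
\[
c_3\,J\le\int_{\mathbb{R}^d}g(\bm\eta)\,|{\bf x}|^{-(d+1)}\,d\sigma({\bf x})=\int_{\mathbb{R}^d}\Bigl(\lim_{T\to\infty}F_T\Bigr)\,d\sigma\le\liminf_{T\to\infty}\int_{\mathbb{R}^d}F_T\,d\sigma<\infty ,
\]
and hence $J<\infty$, which finishes the argument.

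I expect the only genuinely delicate step to be the identification of $\lim_T F_T$ with a \emph{strictly positive} limit $g(\bm\eta)$: this is exactly where the explicit Hertz formula~\eqref{eq_Herz} is indispensable --- through the upper bound on $\kappa$ over $\partial\mathcal K$ and the fact that the oscillation frequency $({\bf x}^+-{\bf x}^-,\bm\eta)$ is bounded away from $0$. By contrast, the crude estimate~\eqref{eq_EstimationforFourier} alone would only give $\sigma(B({\bf 0},r))=\mathcal O(r^{d+1})$ as $r\to0$, which is not quite enough to make $\int|{\bf x}|^{-(d+1)}\,d\sigma$ converge; so the averaging in $t$ (equivalently, exploiting the whole family of admissible ${\bf t}$) genuinely detects the mass of $\sigma$ near the origin, rather than being a mere technical convenience. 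The reduction eliminating $\varphi$, the $\Leftarrow$ direction, and the Tonelli and Fatou manipulations are routine.
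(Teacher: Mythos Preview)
Your proof is correct and follows the same overall strategy as the paper: reduce $\varphi$ to $|{\bf t}|^{-(d+1)}$ via compactness of $S_{X_B}$ (the paper's Proposition~\ref{ProposSubHomogeneous}), handle $\Leftarrow$ directly from estimate~\eqref{eq_EstimationforFourier} and the sector inequality $|{\bf x}\odot{\bf t}|\ge (B\sqrt d)^{-1}|{\bf t}|\,|{\bf x}|$, and for $\Rightarrow$ use Robinson's averaging scheme (multiply by the reciprocal rate, integrate in the time variable, swap by Tonelli, pass to the limit by Fatou, and identify the pointwise limit via the Hertz asymptotic~\eqref{eq_AsimpforFourier}--\eqref{eq_Herz}, using boundedness of $\kappa$ on $\partial\mathcal K$ for the strictly positive lower bound). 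The one substantive difference is the averaging domain: the paper integrates over the $d$-dimensional region $B({\bf 0},p)\cap X_B$ in ${\bm\tau}$ and isolates the limit computation as a separate Proposition~\ref{Pr_limit}, where the oscillatory term is handled by passing to spherical coordinates, applying Riemann--Lebesgue in the radial variable, and then dominated convergence in the angular variable; you instead restrict to the diagonal $(t,\dots,t)$ (which lies in every $B$-sector) and average over the one-dimensional interval $[1,T]$, so the oscillatory cosine is killed by an elementary Ces\`aro bound $\mathcal O((|{\bf x}|T)^{-1})$. Your route is shorter and avoids the auxiliary proposition; the paper's $d$-dimensional averaging, on the other hand, produces explicit constants that display the dependence on $B$ (and motivates the remark after the proof about $B\to\infty$).
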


This result is based on the fact that all positive continuous homogeneous functions of the same degree, restricted to a sector, will be equivalent (Proposition~\ref{ProposSubHomogeneous}).

The last main result says that there are no other homogeneous rates.

\begin{theorem}\label{Th_Criterion3}
	Let ${\varphi:\mathbb{R}^d_+\to\mathbb{R}_+}$ be a homogeneous function of degree ${\theta<-(d+1)}.$ Then, for ${t_1,...,t_d\to\infty},$ the equivalence holds
	$$
	I_{\mathcal{K}}({\bf t})=\mathcal{O}(\varphi({\bf t}))\ \ \Leftrightarrow\ \
	\sigma_{h-Ph}=0.
	$$
\end{theorem}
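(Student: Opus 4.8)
The implication $(\Leftarrow)$ is immediate: if $\sigma_{h-Ph}=0$ then $I_{\mathcal{K}}({\bf t})\equiv0$ by~\eqref{eq_StudInegral}, so $I_{\mathcal{K}}({\bf t})=\mathcal{O}(\varphi({\bf t}))$ holds for any nonnegative~$\varphi$. For $(\Rightarrow)$ I would use the hypothesis only along the diagonal ray ${\bf t}=s{\bf 1}$ with ${\bf 1}=(1,\dots,1)$ and $s\to\infty$. There $\varphi(s{\bf 1})=s^{\theta}\varphi({\bf 1})$, so the hypothesis forces $I_{\mathcal{K}}(s{\bf 1})=\mathcal{O}(s^{\theta})$; since $\theta+(d+1)<0$ this gives $s^{d+1}I_{\mathcal{K}}(s{\bf 1})\to0$ as $s\to\infty$, and hence, for $T\to\infty$,
$$
T^{d+1}\cdot\frac{1}{T}\int_{T}^{2T}I_{\mathcal{K}}(s{\bf 1})\,ds\ \leq\ \sup_{s\geq T}s^{d+1}I_{\mathcal{K}}(s{\bf 1})\ \to\ 0.
$$
The plan is to bound this same quantity from below by a positive multiple of $\int_{\mathbb{R}^d}|{\bf x}|^{-(d+1)}\,d\sigma_{h-Ph}({\bf x})$; forcing the latter to vanish then yields $\sigma_{h-Ph}=0$, because $|{\bf x}|^{-(d+1)}>0$ on $\mathbb{R}^d\setminus\{{\bf 0}\}$ and $\sigma_{h-Ph}(\{{\bf 0}\})=0$.

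For the lower bound, since ${\bf x}\odot(s{\bf 1})=s{\bf x}$, formula~\eqref{eq_StudInegral}, Tonelli's theorem, and the substitution $s=\tau T$ give
$$
\frac{1}{T}\int_{T}^{2T}I_{\mathcal{K}}(s{\bf 1})\,ds=\int_{\mathbb{R}^d}g_T({\bf x})\,d\sigma_{h-Ph}({\bf x}),\qquad
g_T({\bf x}):=\int_{1}^{2}\left|\frac{\mathscr{F}[I_{\mathcal{K}}](\tau T{\bf x})}{\mathcal{L}_d(\mathcal{K})}\right|^{2}d\tau\ \geq\ 0.
$$
The heart of the argument is the evaluation of $\lim_{T\to\infty}T^{d+1}g_T({\bf x})$ for a fixed nonzero ${\bf x}=|{\bf x}|\bm\eta$. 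Squaring the asymptotic equality~\eqref{eq_AsimpforFourier} and using that $\mathcal{D}$ is bounded gives $\left|\mathscr{F}[I_{\mathcal{K}}]({\bf y})\right|^{2}=|{\bf y}|^{-(d+1)}\bigl(|\mathcal{D}({\bf y})|^{2}+o(1)\bigr)$ as $|{\bf y}|\to\infty$, while from~\eqref{eq_Herz} the factor $|\mathcal{D}(\rho\bm\eta)|^{2}$ equals the positive constant $\tfrac{1}{4\pi^{2}}\bigl(\kappa({\bf x}^{+})^{-1}+\kappa({\bf x}^{-})^{-1}\bigr)$ plus a term oscillating in $\rho$ with nonzero frequency $2\pi\bigl(({\bf x}^{+},\bm\eta)-({\bf x}^{-},\bm\eta)\bigr)$ (the width of $\mathcal{K}$ in the direction $\bm\eta$). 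Writing $\left|\mathscr{F}[I_{\mathcal{K}}](\tau T{\bf x})\right|^{2}=(\tau T|{\bf x}|)^{-(d+1)}\bigl(|\mathcal{D}(\tau T|{\bf x}|\bm\eta)|^{2}+o(1)\bigr)$ and applying the Riemann--Lebesgue lemma to the $\tau$-integral (which kills the oscillatory part as $T\to\infty$), I would obtain
$$
\lim_{T\to\infty}T^{d+1}g_T({\bf x})=\frac{1}{\mathcal{L}_d(\mathcal{K})^{2}|{\bf x}|^{d+1}}\cdot\frac{1}{4\pi^{2}}\left(\frac{1}{\kappa({\bf x}^{+})}+\frac{1}{\kappa({\bf x}^{-})}\right)\int_{1}^{2}\tau^{-(d+1)}\,d\tau\ \geq\ \frac{c}{|{\bf x}|^{d+1}},
$$
with $c>0$ depending only on $\mathcal{K}$ and $d$; here one uses that $\kappa$ is continuous and strictly positive on the compact boundary $\partial\mathcal{K}$, hence bounded above.

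It then remains to apply Fatou's lemma to the nonnegative functions ${\bf x}\mapsto T^{d+1}g_T({\bf x})$: using the two displays above,
$$
\liminf_{T\to\infty}\left(T^{d+1}\cdot\frac{1}{T}\int_{T}^{2T}I_{\mathcal{K}}(s{\bf 1})\,ds\right)\ \geq\ \int_{\mathbb{R}^d}\lim_{T\to\infty}T^{d+1}g_T({\bf x})\,d\sigma_{h-Ph}({\bf x})\ \geq\ c\int_{\mathbb{R}^d}\frac{d\sigma_{h-Ph}({\bf x})}{|{\bf x}|^{d+1}}.
$$
Since the left-hand side tends to $0$, we get $\int_{\mathbb{R}^d}|{\bf x}|^{-(d+1)}\,d\sigma_{h-Ph}({\bf x})=0$, and therefore $\sigma_{h-Ph}=0$. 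The step I expect to be the real obstacle is the asymptotic evaluation of $T^{d+1}g_T({\bf x})$: one must control the remainder $o(|{\bf y}|^{-(d+1)/2})$ from~\eqref{eq_AsimpforFourier} after squaring and averaging in $s$, and then extract from the oscillatory principal term $|\mathcal{D}|^{2}$ a lower bound that is positive and uniform over the directions $\bm\eta$. The reduction to the diagonal, the Tonelli/Fatou bookkeeping, and the place where the strict inequality $\theta<-(d+1)$ is used are routine.
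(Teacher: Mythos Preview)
Your argument is correct and takes a genuinely different route from the paper. The paper fixes a ray ${\bf t}=p{\bf s}$, writes $p^{d+1}I_{\mathcal K}(p{\bf s})=\int p^{d+1}|\mathscr F[I_{\mathcal K}]({\bf x}\odot p{\bf s})|^2\,d\sigma_h$, and passes to the limit inside the integral via Lebesgue's dominated convergence theorem, using the finiteness of $\int|{\bf x}|^{-(d+1)}\,d\sigma_h$ (imported from Theorem~\ref{Th_Criterion2}) to supply the majorant. After the remainder term is killed this way, the paper rewrites $|\mathcal D|^2$ as the sum of two nonnegative pieces, one of which is $4\cos^2(\pi h(p,{\bf x},{\bf s}))/\sqrt{\kappa^+\kappa^-}$; the vanishing of the limit then forces $\cos(\pi p\,a({\bf x})+\pi(d+1)/4)\to0$ in $L_2(\nu)$, and an external lemma from~\cite{KPT23} is invoked to conclude $\nu=0$.

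Your idea of averaging in the radial variable $s\in[T,2T]$ before passing to the limit replaces both of these ingredients by lighter ones: the oscillatory part of $|\mathcal D|^2$ is eliminated by the one-dimensional Riemann--Lebesgue lemma (no external lemma needed), and Fatou's lemma replaces dominated convergence (so you never need Theorem~\ref{Th_Criterion2} as input). The resulting proof is self-contained and shorter. The point you flagged as the ``real obstacle'' is in fact harmless: for fixed ${\bf x}\neq{\bf 0}$ the error $o(1)$ in $|\mathscr F[I_{\mathcal K}](\tau T{\bf x})|^2=(\tau T|{\bf x}|)^{-(d+1)}(|\mathcal D|^2+o(1))$ is uniform over $\tau\in[1,2]$ because $|\tau T{\bf x}|\geq T|{\bf x}|\to\infty$, and the uniform lower bound on the constant part of $|\mathcal D|^2$ comes from $\max_{\partial\mathcal K}\kappa<\infty$, exactly as you wrote. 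What the paper's approach buys is only structural parallelism with the proof of Theorem~\ref{Th_Criterion2}; your approach buys independence from that theorem and from~\cite{KPT23}.
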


Without loss of generality, in the following proofs we will assume that ${Ph=0}.$

\section{Homogeneous rates of degree $\theta>-(d+1)$}

In this section we prove Theorem~\ref{Th_Criterion1} and obtain estimates of the rate of convergence from estimates of the spectral measure (Proposition~\ref{Propos_Estimates}). For the one-way implication in Theorem~\ref{Th_Criterion1} we prove a more general statement.

\begin{proposition}\label{Pr1}
Suppose ${I_{\mathcal{K}}({\bf t})\leq\varphi(\bf t)}$ for all ${{\bf t}>\bf 0},$ where $\varphi$ is an arbitrary function tending to zero as ${t_1,...,t_d\to\infty}$. Then, for any ${\varepsilon\in(0,1)},$ there exists a vector ${{\bf a}>\bf0}$ such that for all ${{\bf t}>\bf 0}$ 
$$
{\sigma_h(\mathcal{E}({\bf t}^{-1}))\leq\varepsilon^{-2}\varphi({\bf t}\odot{\bf a})}.
$$ 
\end{proposition}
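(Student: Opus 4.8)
The plan is to exploit formula~\eqref{eq_StudInegral} together with the lower bound for the Fourier transform that is implicit in Hertz's asymptotics~\eqref{eq_AsimpforFourier}. The key point is that $\frac{1}{\mathcal{L}_d(\mathcal{K})}\mathscr{F}[I_{\mathcal{K}}]({\bf 0})=1$, so by continuity of the Fourier transform there exists a genuine neighborhood of the origin on which $\left|\frac{1}{\mathcal{L}_d(\mathcal{K})}\mathscr{F}[I_{\mathcal{K}}]({\bf z})\right|\geq\varepsilon$. Since we are free to choose the shape of the neighborhood, I would fix $\varepsilon\in(0,1)$ and pick a vector ${\bf b}>{\bf 0}$ such that the ellipsoid $\mathcal{E}({\bf b})$ is contained in that neighborhood; thus $\left|\frac{1}{\mathcal{L}_d(\mathcal{K})}\mathscr{F}[I_{\mathcal{K}}]({\bf z})\right|\geq\varepsilon$ for all ${\bf z}\in\mathcal{E}({\bf b})$.

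Next I would restrict the integral in~\eqref{eq_StudInegral} to the set of ${\bf x}$ for which ${\bf x}\odot{\bf t}\in\mathcal{E}({\bf b})$. Unwinding the definition of the ellipsoid, the condition $|({\bf x}\odot{\bf t})\odot{\bf b}^{-1}|<1$ is exactly $|{\bf x}\odot({\bf b}^{-1}\odot{\bf t})|<1$, i.e. ${\bf x}\in\mathcal{E}\big(({\bf b}^{-1}\odot{\bf t})^{-1}\big)=\mathcal{E}({\bf b}\odot{\bf t}^{-1})$. On this set the integrand is at least $\varepsilon^2$, so
$$
\varphi({\bf t})\geq I_{\mathcal{K}}({\bf t})\geq\int_{\mathcal{E}({\bf b}\odot{\bf t}^{-1})}\left|\frac{1}{\mathcal{L}_d(\mathcal{K})}\mathscr{F}[I_{\mathcal{K}}]({\bf x}\odot{\bf t})\right|^2d\sigma_h({\bf x})\geq\varepsilon^2\,\sigma_h\big(\mathcal{E}({\bf b}\odot{\bf t}^{-1})\big).
$$
Therefore $\sigma_h\big(\mathcal{E}({\bf b}\odot{\bf t}^{-1})\big)\leq\varepsilon^{-2}\varphi({\bf t})$ for every ${\bf t}>{\bf 0}$.

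It remains to rewrite this in the form stated. Setting ${\bf a}={\bf b}^{-1}$, I would replace ${\bf t}$ by ${\bf t}\odot{\bf a}^{-1}={\bf t}\odot{\bf b}$ in the inequality just obtained: since $({\bf t}\odot{\bf b})>{\bf 0}$ whenever ${\bf t}>{\bf 0}$, this is legitimate, and $\mathcal{E}\big({\bf b}\odot({\bf t}\odot{\bf b})^{-1}\big)=\mathcal{E}({\bf t}^{-1})$, giving $\sigma_h(\mathcal{E}({\bf t}^{-1}))\leq\varepsilon^{-2}\varphi({\bf t}\odot{\bf b})=\varepsilon^{-2}\varphi({\bf t}\odot{\bf a}^{-1})$; adjusting the naming of the fixed vector (call it ${\bf a}$ rather than ${\bf a}^{-1}$) yields the statement. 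The only mild subtlety — and the step I would be most careful about — is bookkeeping the interplay of the $\odot$-homothety with inversion of coordinate vectors, so that the ellipsoid arguments match up exactly; the analytic content (continuity of $\mathscr{F}[I_{\mathcal{K}}]$ at the origin and its value $\mathcal{L}_d(\mathcal{K})$ there) is elementary and requires neither~\eqref{eq_EstimationforFourier} nor~\eqref{eq_Herz}, only that $\mathcal{K}$ has finite measure.
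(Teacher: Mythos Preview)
Your argument is correct and follows essentially the same route as the paper: use continuity of $\frac{1}{\mathcal{L}_d(\mathcal{K})}\mathscr{F}[I_{\mathcal{K}}]$ at the origin to get the lower bound $\varepsilon$ on an ellipsoid $\mathcal{E}({\bf b})$, restrict the integral defining $I_{\mathcal{K}}$ accordingly, and rescale. The only cosmetic difference is that the paper works directly with $I_{\mathcal{K}}({\bf t}\odot{\bf a})$ restricted to $\mathcal{E}({\bf t}^{-1})$, whereas you obtain the bound for $I_{\mathcal{K}}({\bf t})$ on $\mathcal{E}({\bf b}\odot{\bf t}^{-1})$ and then substitute---and your detour through ${\bf a}={\bf b}^{-1}$ is unnecessary (just set ${\bf a}={\bf b}$), though harmless.
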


\begin{proof}
	
Indeed, $\frac{1}{\mathcal{L}_d(\mathcal{K})}|\mathscr{F}[I_{\mathcal{K}}]({\bf x})|$ is a continuous mapping of the variable ${{\bf x}\in\mathbb{R}^d}$ taking values from the interval~$[0,1]$ and equal to one for ${{\bf x}=\bf0}$. Therefore, for any ${\varepsilon\in(0,1)},$ there is a vector ${{\bf a}>\bf0}$ such that the smallest value of this function on the ellipsoid ${\mathcal{E}({\bf a})}$ is not less than ${\varepsilon}.$
Then, for all ${{\bf t}>\bf0},$ we have
$$
\varepsilon\leq\min\limits_{{\bf y}\in \mathcal{E}({\bf a})}\frac{1}{\mathcal{L}_d(\mathcal{K})}|\mathscr{F}[I_{\mathcal{K}}]({\bf y})|=
\min\limits_{{\bf x}\in \mathcal{E}({\bf t}^{-1})}\frac{1}{\mathcal{L}_d(\mathcal{K})}|\mathscr{F}[I_{\mathcal{K}}]({\bf x}\odot{\bf t}\odot{\bf a})|.
$$	
Taking this inequality into account and using~\eqref{eq_StudInegral}, we obtain
\begin{multline*}
\sigma_h(\mathcal{E}({\bf t}^{-1}))=\varepsilon^{-2}\int_{\mathcal{E}({\bf t}^{-1})}\varepsilon^2 d\sigma_h({\bf x})\leq\varepsilon^{-2}\int_{\mathcal{E}({\bf t}^{-1})}\frac{1}{\mathcal{L}^2_d(\mathcal{K})}|\mathscr{F}[I_{\mathcal{K}}]({\bf x}\odot{\bf t}\odot{\bf a})|^2d\sigma_h({\bf x})\leq\\
\leq\varepsilon^{-2}\int_{\mathbb{R}^d}\frac{1}{\mathcal{L}^2_d(\mathcal{K})}|\mathscr{F}[I_{\mathcal{K}}]({\bf x}\odot{\bf t}\odot{\bf a})|^2d\sigma_h({\bf x})=\varepsilon^{-2}I_{\mathcal{K}}({\bf t}\odot{\bf a})\leq\varepsilon^{-2}\varphi({\bf t}\odot{\bf a}).
\end{multline*}
\end{proof}

\begin{proof}[Proof of Theorem~\ref{Th_Criterion1}]
Assume ${I_{\mathcal{K}}({\bf t})\leq \varphi({\bf t})}$ for all ${\bf t}>{\bm0},$ where $\varphi$ is a homogeneous function of degree $\theta.$ Then Proposition~\ref{Pr1} yields the estimate
$\sigma_h(\mathcal{E}({\bf t}^{-1}))\leq\varepsilon^{-1}\varphi({\bf t}\odot{\bf a}).$ It is easy to see that the vector ${\bf a}$ can be chosen to have the same coordinates, i.e., ${{\bf a}=a(1,1,...,1)},\ a>0.$ Then from homogeneity we get
$$
\sigma_h(\mathcal{E}({\bf t}^{-1}))\leq\varepsilon^{-2}a^\theta\varphi({\bf t}).
$$

Let us now prove the implication in the other direction, using the formula for the integral~${I_{\mathcal{K}}(\bf t)}$ through the distribution function
\begin{equation}\label{eq_StudInegral2}
I_{\mathcal{K}}({\bf t})=2\int_0^1u\sigma_h\left(\left\{{\bf x}\in\mathbb{R}^d:\ \frac{1}{\mathcal{L}_d(\mathcal{K})}|\mathscr{F}[I_{\mathcal{K}}]({\bf x}\odot{\bf t})|>u\right\}\right)\,du.
\end{equation}
Here, the asymptotics of the Fourier transform plays a significant role. Using  inequality~\eqref{eq_EstimationforFourier} and substituting it into~\eqref{eq_StudInegral2}, we obtain the estimate
\begin{multline*}
I_{\mathcal{K}}({\bf t})\leq2\int_0^1u\sigma_h\left(\left\{{\bf x}\in\mathbb{R}^d:\ \frac{1}{\mathcal{L}_d(\mathcal{K})}D|{\bf x}\odot{\bf t}|^{-\frac{d+1}{2}}>u\right\}\right)\,du=\\
=2\int_0^1u\sigma_h\left(\left\{{\bf x}\in\mathbb{R}^d:\ |{\bf x}\odot{\bf t}|<\left(\frac{D}{u\mathcal{L}_d(\mathcal{K})}\right)^{\frac{2}{d+1}}\right\}\right)\,du=\\
=2\int_0^1u\sigma_h\left(\mathcal{E}\left({\bf t}^{-1}\left(\frac{u\mathcal{L}_d(\mathcal{K})}{D}\right)^{-\frac{2}{d+1}}\right)\right)\,du.
\end{multline*}
Estimating the spectral measure on ellipsoids, ${\sigma_h(\mathcal{E}({\bf t}^{-1}))\leq\varphi({\bf t})}$ for any vector ${{\bf t}>\bf0},$ leads to the inequality
$$
I_{\mathcal{K}}({\bf t})\leq2\int_0^1u\varphi\left({\bf t}\left(\frac{u\mathcal{L}_d(\mathcal{K})}{D}\right)^{\frac{2}{d+1}}\right)\,du\leq
2\int_0^1u\varphi({\bf t})\left(\left(\frac{u\mathcal{L}_d(\mathcal{K})}{D}\right)^{\frac{2}{d+1}}\right)^\theta\,du.
$$
Denoting
$$
C=2\left(\frac{\mathcal{L}_d(\mathcal{K})}{D}\right)^{\frac{2\theta}{d+1}},
$$
for all ${{\bf t}>\bf0},$ we obtain
$$
I_{\mathcal{K}}({\bf t})\leq
C\varphi({\bf t})\int_0^1u^{1+\frac{2\theta}{d+1}}\,du=\varphi({\bf t})\frac{C(d+1)}{2(d+1+\theta)}.
$$
This inequality completes the proof of Theorem~\ref{Th_Criterion1}.
\end{proof}

As a simple consequence of the last calculations, the following statement can easily be obtained (see also~\cite[\S 3.1]{KPTKh24}).

\begin{proposition}\label{Propos_Estimates}
	Let ${\varphi:\mathbb{R}^d_+\to\mathbb{R}_+}$ be a homogeneous function of degree ${\theta\leq0},$ and 
	$$
	{\sigma_{h}(\mathcal{E}({\bf t}^{-1}))=\mathcal{O}(\varphi({\bf t}))}\ \ \ \text{as} \ \ \ {t_1,...,t_d\to\infty}.
	$$ 
	(1) If ${\theta\in(-(d+1),0]},$ then $I_{\mathcal{K}}({\bf t})=\mathcal{O}(\varphi({\bf t}));$
	\\
	(2) if ${\theta=-(d+1)},$ then $I_{\mathcal{K}}({\bf t})=\mathcal{O}(\varphi({\bf t})\ln\frac{1}{\varphi({\bf t})});$
	\\
	(3) if ${\theta<-(d+1)},$ then $I_{\mathcal{K}}({\bf t})=\mathcal{O}(\varphi^{-\frac{d+1}{\theta}}({\bf t})).$
\end{proposition}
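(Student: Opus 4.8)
The plan is to reuse, essentially verbatim, the computation carried out midway through the proof of Theorem~\ref{Th_Criterion1}, isolating there the step that relies only on the Fourier decay~\eqref{eq_EstimationforFourier}. For part~(1) there is in fact nothing new to prove: the assumption $\theta\in(-(d+1),0]$ together with $\sigma_h(\mathcal{E}({\bf t}^{-1}))=\mathcal{O}(\varphi({\bf t}))$ is exactly the right-hand side of the equivalence in Theorem~\ref{Th_Criterion1}, so $I_{\mathcal{K}}({\bf t})=\mathcal{O}(\varphi({\bf t}))$ by that theorem. The content of parts~(2) and~(3) is that when $\theta\le-(d+1)$ the argument of Theorem~\ref{Th_Criterion1} breaks down — the integral $\int_0^1 u^{1+\frac{2\theta}{d+1}}\,du$ appearing there diverges at the origin — and one must interpose the trivial bound $\sigma_h(\mathcal{E}(\cdot))\le\sigma_h(\mathbb{R}^d)$.

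Concretely, I would start from the inequality obtained in the proof of Theorem~\ref{Th_Criterion1} just after applying~\eqref{eq_EstimationforFourier},
$$
I_{\mathcal{K}}({\bf t})\le 2\int_0^1 u\,\sigma_h\!\left(\mathcal{E}\!\left({\bf t}^{-1}\left(\tfrac{u\mathcal{L}_d(\mathcal{K})}{D}\right)^{-\frac{2}{d+1}}\right)\right)du,
$$
which is valid for every $\theta$. For the integrand I would use two estimates for the spectral mass of that ellipsoid: the trivial one $\sigma_h(\mathcal{E}(\cdot))\le\sigma_h(\mathbb{R}^d)=:\|\sigma_h\|$, and, exactly as in the proof of Theorem~\ref{Th_Criterion1}, the hypothesis combined with homogeneity of degree $\theta$, namely $\sigma_h(\mathcal{E}({\bf t}^{-1}(\tfrac{u\mathcal{L}_d(\mathcal{K})}{D})^{-\frac{2}{d+1}}))\le C\,(\tfrac{u\mathcal{L}_d(\mathcal{K})}{D})^{\frac{2\theta}{d+1}}\varphi({\bf t})$. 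Since $\theta<0$, the second bound exceeds $\|\sigma_h\|$ for $u$ near $0$ and is smaller for $u$ near $1$; I would therefore split the integral at the crossover value $u_*=u_*({\bf t})$ at which the two bounds agree, which works out to $u_*\asymp(\varphi({\bf t})/\|\sigma_h\|)^{-\frac{d+1}{2\theta}}$ (a positive power, so $u_*\to0$ as $t_1,\dots,t_d\to\infty$). Using the trivial bound on $(0,u_*)$ and the homogeneous one on $(u_*,1)$ yields
$$
I_{\mathcal{K}}({\bf t})\ \lesssim\ \|\sigma_h\|\,u_*^2\ +\ \varphi({\bf t})\left(\tfrac{\mathcal{L}_d(\mathcal{K})}{D}\right)^{\frac{2\theta}{d+1}}\int_{u_*}^1 u^{1+\frac{2\theta}{d+1}}\,du.
$$

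It then remains to evaluate the right-hand side and match orders; this is the only genuine computation, and the one place where one must watch the signs of the exponents. If $\theta=-(d+1)$, then $1+\tfrac{2\theta}{d+1}=-1$ and $u_*\asymp\varphi({\bf t})^{1/2}$, so $\|\sigma_h\|u_*^2\asymp\varphi({\bf t})$ while $\varphi({\bf t})\int_{u_*}^1 u^{-1}du=\varphi({\bf t})\ln\tfrac1{u_*}\asymp\varphi({\bf t})\ln\tfrac1{\varphi({\bf t})}$, giving $I_{\mathcal{K}}({\bf t})=\mathcal{O}(\varphi({\bf t})\ln\tfrac1{\varphi({\bf t})})$. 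If $\theta<-(d+1)$, then $2+\tfrac{2\theta}{d+1}<0$, so $\int_{u_*}^1 u^{1+\frac{2\theta}{d+1}}du\asymp u_*^{2+\frac{2\theta}{d+1}}$; substituting the value of $u_*$ one checks that both $\|\sigma_h\|u_*^2$ and $\varphi({\bf t})\,u_*^{2+\frac{2\theta}{d+1}}$ are of order $\varphi({\bf t})^{-\frac{d+1}{\theta}}$ — and since $\theta<-(d+1)$ forces $-\tfrac{d+1}{\theta}\in(0,1)$, this quantity indeed tends to $0$, only more slowly than $\varphi({\bf t})$ — whence $I_{\mathcal{K}}({\bf t})=\mathcal{O}(\varphi^{-\frac{d+1}{\theta}}({\bf t}))$. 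The only obstacle is bookkeeping: correctly locating $u_*$ and checking that the ``trivial-bound'' piece and the ``homogeneous-bound'' piece are of the same order in each of the two critical regimes.
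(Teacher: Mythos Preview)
Your proposal is correct and is essentially the paper's own argument: the paper also splits the integral $2\int_0^1 u\,\sigma_h(\cdots)\,du$ at a free point $\varepsilon\in(0,1)$, uses the trivial bound $\sigma_h\le\|h\|^2$ on $(0,\varepsilon)$ and the homogeneous bound on $(\varepsilon,1)$, and then optimizes over~$\varepsilon$ --- the minimizing $\varepsilon$ being (up to constants) exactly your crossover value~$u_*$. The only cosmetic difference is that you locate $u_*$ a priori as the point where the two bounds balance, while the paper leaves $\varepsilon$ free and minimizes afterward.
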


\begin{proof} Case (1) is already proved.
Using~\eqref{eq_StudInegral2}, for ${\varepsilon\in(0,1)},$ we get
$$
I_{\mathcal{K}}({\bf t})\leq\varepsilon^2\|h\|^2+C\varphi({\bf t})\int_\varepsilon^1u^{1+\frac{2\theta}{d+1}}\,du.
$$
If ${\theta=-(d+1)},$ then we obtain the estimate
$$
I_{\mathcal{K}}({\bf t})\leq\varepsilon^2\|h\|^2-C\varphi({\bf t})\ln\varepsilon.
$$
Minimization over ${\varepsilon>0}$ yields, for all ${\bf t}>0$ with ${\varepsilon^2=\frac{C\varphi(\bf{t})}{2\|h\|^2}<1},$
$$
I_{\mathcal{K}}({\bf t})\leq\frac{C}{2}\varphi({\bf t})\left(1-\ln\left(\frac{C\varphi(\bf{t})}{2\|h\|^2}\right)\right)=\mathcal{O}\left(\varphi({\bf t})\ln\frac{1}{\varphi({\bf t})}\right).
$$
If now ${\theta<-(d+1)},$ then we obtain the estimate
$$
I_{\mathcal{K}}({\bf t})\leq\varepsilon^2\|h\|^2+\varphi({\bf t})\frac{C(d+1)}{2|d+1+\theta|}\varepsilon^{2(1+\frac{\theta}{d+1})}.
$$
Taking ${\varepsilon^{\frac{2\theta}{d+1}}=\frac{\|h\|^2}{C\varphi({\bf t})}}>1,$ we get
${I_{\mathcal{K}}({\bf t})=\mathcal{O}(\varphi^{-\frac{d+1}{\theta}}({\bf t}))}.$
\end{proof}

\section{Homogeneous rates of degree $\theta=-(d+1)$}

In the following statement we consider conditions under which 
all positive continuous homogeneous functions will be equivalent.

\begin{proposition}\label{ProposSubHomogeneous}
Let $X\subset\mathbb{R}^d$ be a cone set. If $S_X$ is a compact set, then all homogeneous functions of the same degree defined on $X$, continuous and positive on the unit sphere $S_X$, are equivalent. Conversely, if all homogeneous functions of the same degree defined on $X$, continuous and positive on the unit sphere $S_X$, are equivalent, then $S_X$ is a compact set.
\end{proposition}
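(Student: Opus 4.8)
The plan is to handle the two implications separately, since they have quite different flavors.

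For the forward direction (compactness of $S_X$ implies equivalence), suppose $f$ and $g$ are two homogeneous functions of the same degree $\theta$, both continuous and positive on $S_X$. A continuous positive function on a compact set attains a positive minimum and a finite maximum, so there exist constants $0 < m \leq M < \infty$ with $m \leq f({\bf x}), g({\bf x}) \leq M$ for all ${\bf x} \in S_X$. Writing an arbitrary ${\bf x} \in X$ as ${\bf x} = |{\bf x}|\,\bm\eta$ with $\bm\eta = {\bf x}/|{\bf x}| \in S_X$ and using homogeneity, $f({\bf x}) = |{\bf x}|^\theta f(\bm\eta)$ and similarly for $g$; hence $(m/M)\,g({\bf x}) \leq f({\bf x}) \leq (M/m)\,g({\bf x})$ for all ${\bf x} \in X$, which is exactly equivalence. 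I would also note in passing that continuity on all of $X$ is not needed — continuity on $S_X$ suffices — matching the hypothesis as stated.

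For the converse (equivalence of all such functions implies $S_X$ compact), I would argue by contraposition: assuming $S_X$ is not compact, I construct two homogeneous functions of the same degree, each continuous and positive on $S_X$, that are not equivalent. Since $S_X$ sits inside the compact unit sphere of $\mathbb{R}^d$, it is bounded, so non-compactness means $S_X$ is not closed: there is a point $\bm\eta_0$ in the closure of $S_X$ with $\bm\eta_0 \notin S_X$ (equivalently, ${\bf 0} \notin X$ automatically, and $\bm\eta_0 \ne {\bf 0}$ since it lies on the unit sphere). Pick a sequence $\bm\eta_n \in S_X$ with $\bm\eta_n \to \bm\eta_0$. The idea is to take one function $g \equiv 1$ on $S_X$ (extended by homogeneity of degree $\theta$) and a second function $f$ that is continuous and positive on $S_X$ but blows up (or vanishes) along the sequence $\bm\eta_n$: for instance, let $\rho({\bm\eta}) = \operatorname{dist}(\bm\eta, \bm\eta_0)$ restricted to $S_X$, which is continuous and strictly positive on $S_X$ (strictly, because $\bm\eta_0 \notin S_X$) but tends to $0$ along $\bm\eta_n$; then set $f = \rho$ on $S_X$ and extend homogeneously. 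Along the directions $\bm\eta_n$ we have $f(\bm\eta_n)/g(\bm\eta_n) = \rho(\bm\eta_n) \to 0$, so no lower bound $f \geq c\,g$ can hold, and $f,g$ are not equivalent. This contradicts the assumed equivalence of all such functions, so $S_X$ must be compact.

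The main obstacle is the converse direction, and specifically making sure the counterexample function is genuinely admissible: it must be continuous \emph{and strictly positive} on $S_X$ while failing equivalence. Using the distance to an omitted boundary point $\bm\eta_0$ handles this cleanly — positivity is guaranteed precisely because $\bm\eta_0$ is excluded from $S_X$, continuity is inherited from the ambient distance function, and the failure of equivalence is witnessed along the approximating sequence. One should double-check the degenerate possibility that $S_X$ is empty (then $X = \emptyset$ or $X = \{{\bf 0}\}$); in the stated convention a cone set need not contain ${\bf 0}$, and the empty sphere is vacuously compact with all statements about functions on it trivially true, so this edge case is consistent with the proposition. I would dispatch it in one sentence and otherwise assume $S_X \neq \emptyset$.
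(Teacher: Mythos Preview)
Your proof is correct and follows the same overall route as the paper: the forward direction is identical (a continuous positive function on compact $S_X$ has positive infimum and finite supremum, so every such $\varphi$ is equivalent to $|{\bf t}|^{\theta}$), and the converse proceeds by locating a point $\bm\eta_0\in\overline{S_X}\setminus S_X$ and building an admissible homogeneous function that degenerates along a sequence approaching $\bm\eta_0$.

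The only difference is in the specific counterexample function for the converse. The paper uses, for $\theta\neq 0$, the homogeneous function
\[
\varphi_{\bf s}({\bf t})=\Bigl(\sum_{j=1}^d\bigl||t_1|s_j-t_j|s_1|\bigr|\Bigr)^{\theta},
\]
checks that it vanishes only along the ray through ${\bf s}=\bm\eta_0$ (requiring the side assumption $s_1\neq 0$), and then treats $\theta=0$ separately with $|{\bf s}-{\bf t}/|{\bf t}||$. Your choice $f({\bf x})=|{\bf x}|^{\theta}\operatorname{dist}({\bf x}/|{\bf x}|,\bm\eta_0)$ against $g({\bf x})=|{\bf x}|^{\theta}$ is cleaner: it needs no coordinate normalization and handles all degrees $\theta$ uniformly, since the ratio $f/g$ on $S_X$ is just the distance and is independent of $\theta$. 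So your argument is a mild streamlining of the paper's, not a different strategy.
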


\begin{proof}
Let $\varphi$ be a homogeneous function of degree~$\theta,$ continuous and positive on the unit sphere $S_X$. Show that there exist constants~${E,F>0}$ such that for all ${{\bf t}\in X}$
$$
E|{\bf t}|^{\theta}\leq\varphi({\bf t})\leq F|{\bf t}|^{\theta}.
$$
Indeed, since
$$
\varphi({\bf t})=|{\bf t}|^\theta\varphi\left(\frac{t_1}{|t|},\frac{t_2}{|t|},...,\frac{t_d}{|t|}\right),
$$
then
$$
E=\inf\limits_{{\bf s}\in S_X}\varphi({\bf s}),\ \ F=\sup\limits_{{\bf s}\in S_X}\varphi({\bf s}).
$$
Since $\varphi$ is continuous and positive on the unit sphere~$S_X$ and $S_X$ is a compact set, then ${0<E\leq F<\infty}.$

Let us now prove the converse statement. Let ${\bf s}_n$ be a sequence in $S_X.$ There is a subsequence ${\bf s}_{n_k}$ and a vector ${\bf s}\in\mathbb{R}^d$ such that ${|{\bf s}_{n_k}-{\bf s}|\to0}$ as ${k\to\infty}.$ Suppose that ${{\bf s}\not\in S_X},$ and consequently ${r{\bf s}\not\in X}$ for all $r>0.$ 
Without loss of generality, we will assume that ${s_1\neq0}.$
Consider the homogeneous function
$$
\varphi_{\bf s}({\bf t})=\left(\sum_{j=1}^d\bigl||t_1|s_j-t_j|s_1|\bigr|\right)^\theta,\ \ {\bf t}\in X
$$
of nonzero degree $\theta.$ It is well-defined on $X,$ positive and continuous since 
$$
\sum_{j=1}^d\bigl||t_1|s_j-t_j|s_1|\bigr|=0\ \ \ \Leftrightarrow\ \ \ {\bf t}=r{\bf s}
$$
for some $r>0.$
From the equivalence condition we have 
$$
E|{\bf t}|^{\theta}\leq\varphi_{\bf s}({\bf t})\leq F|{\bf t}|^{\theta}
$$
for all ${\bf t}\in X$ and some $E,F>0.$ For ${\bf t}={\bf s}_{n_k}$ we get
$$
E\leq\varphi_{\bf s}({\bf s}_{n_k})\leq F.
$$
Passing to the limit as $k\to\infty$, we get a contradiction, since
$$
\lim_{k\to\infty}\varphi_{\bf s}({\bf s}_{n_k})=
\left\{\begin{array}{cl}
0& \text{if}\ \theta>0\\
+\infty& \text{if}\ \theta<0.
\end{array}\right.
$$
For the case ${\theta=0}$ to obtain a contradiction, it is sufficient to take the function
$$
\varphi_{\bf s}(\bf t)=\left|{\bf s}-\frac{\bf t}{|\bf t|}\right|.
$$
\end{proof}

Note that if $X$ is a sector, then the unit sphere $S_X$ is a compact set.

\begin{proof}[Proof of Theorem~\ref{Th_Criterion2}]

Assume that ${A=\int_{\mathbb{R}^d}\frac{d\sigma_{h}({\bf x})}{|{\bf x}|^{d+1}}<\infty}.$
Taking into account equality~\eqref{eq_StudInegral} and  estimate~\eqref{eq_EstimationforFourier}, we have
$$
I_{\mathcal{K}}({\bf t})\leq\frac{D^2}{\mathcal{L}_d^2(\mathcal{K})}\int_{\mathbb{R}^d}\frac{d\sigma_h}{|{\bf x}\odot{\bf t}|^{d+1}}.
$$
The condition of belonging to the $B$-sector for the vector~${{\bf t}>\bf0}$ implies
$$
|{\bf x}|Bt_\ell\geq|{\bf x}\odot{\bf t}|\geq|{\bf x}|B^{-1}t_\ell,\ \ 1\leq\ell\leq d,
$$
and therefore
$$
\frac{B|{\bf t}||{\bf x}|}{\sqrt{d}}\geq|{\bf x}\odot{\bf t}|\geq\frac{B^{-1}|{\bf t}||{\bf x}|}{\sqrt{d}}.
$$
As a result, taking into account Proposition~\ref{ProposSubHomogeneous}, we obtain the required estimate
$$
I_{\mathcal{K}}({\bf t})\leq\frac{D^2B^{d+1}d^{\frac{d+1}{2}}}{\mathcal{L}_d^2(\mathcal{K})}\int_{\mathbb{R}^d}\frac{d\sigma_h}{|{\bf t}|^{d+1}|{\bf x}|^{d+1}}=\frac{D^2B^{d+1}d^{\frac{d+1}{2}}A}{\mathcal{L}_d^2(\mathcal{K})}|{\bf t}|^{-(d+1)}\leq\frac{D^2B^{d+1}d^{\frac{d+1}{2}}AE^{-1}}{\mathcal{L}_d^2(\mathcal{K})}\varphi({\bf t}).
$$

Assume now that ${I_{\mathcal{K}}({\bf t})=\mathcal{O}(\varphi({\bf t}))}$ for all ${{\bf t}>\bf0}$ from the sector~$X_B.$
It follows from Proposition~\ref{ProposSubHomogeneous} that
$$
I_{\mathcal{K}}({\bf t})\leq F|{\bf t}|^{-(d+1)}
$$
for some constant ${F>0}.$ Next we will use the proof scheme proposed by Robinson in~\cite{Rob}. Integrating over the part of the ball~${B({\bf0},p)}$ with center at zero and radius $p>0,$ lying in $X_B,$ we obtain
$$
\frac{1}{\mathcal{L}_d(B({\bf0},p))}\int_{B({\bf0},p)\cap X_B}|{\bm\tau}|^{d+1}I_{\mathcal{K}}({\bm\tau})\,d\mathcal{L}_d({\bm\tau})\leq F.
$$
Expression~\eqref{eq_StudInegral} for the integral ${I_{\mathcal{K}}}$ and Tonelli's theorem lead to the inequality
$$
\int_{\mathbb{R}^d}\frac{1}{p^d}\int_{B({\bf0},p)\cap X_B}|{\bm\tau}|^{d+1}\left|\mathscr{F}[I_{\mathcal{K}}]({\bf x}\odot{\bm\tau})\right|^2\,d\mathcal{L}_d({\bm\tau})\,d\sigma_h({\bf x})\leq F\mathcal{L}^2_d(\mathcal{K})\frac{\pi^{d/2}}{\Gamma(d/2+1)}.
$$
Applying Fatou's lemma to this inequality we obtain
\begin{equation}\label{eq_Int&Lim}
\int_{\mathbb{R}^d}\lim_{p\to\infty}\frac{1}{p^d}\int_{B({\bf0},p)\cap X_B}|{\bm\tau}|^{d+1}\left|\mathscr{F}[I_{\mathcal{K}}]({\bf x}\odot{\bm\tau})\right|^2\,d\mathcal{L}_d({\bm\tau})\,d\sigma_h({\bf x})\leq F\mathcal{L}^2_d(\mathcal{K})\frac{\pi^{d/2}}{\Gamma(d/2+1)}.
\end{equation}
We will formulate the calculation of the limit as an independent statement.

\begin{proposition}\label{Pr_limit}
For any ${\bf x}\neq{\bf 0}$ the equality holds
\begin{multline*}
\lim_{p\to\infty}\frac{1}{p^d}\int_{B({\bf0},p)^d\cap X_B}|{\bm\tau}|^{d+1}\left|\mathscr{F}[I_{\mathcal{K}}]({\bf x}\odot{\bm\tau})\right|^2\,d\mathcal{L}_d({\bm\tau})=\\
=\frac{1}{4\pi^2}\int_{B({\bf0},1)\cap X_B}\frac{|{\bf s}|^{d+1}}{|{\bf x}\odot{\bf s}|^{d+1}}
\left(\frac{1}{\kappa(({\bf x}\odot{\bf s})^+)}+\frac{1}{\kappa(({\bf x}\odot{\bf s})^-)}\right)\,d\mathcal{L}_d({\bf s}).
\end{multline*}	
\end{proposition}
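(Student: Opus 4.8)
The plan is to rescale. Substituting ${\bm\tau}=p{\bf s}$ turns the left-hand side into
$$
\lim_{p\to\infty}\int_{B({\bf0},1)\cap X_B}(p|{\bf s}|)^{d+1}\bigl|\mathscr{F}[I_{\mathcal{K}}]\bigl(p({\bf x}\odot{\bf s})\bigr)\bigr|^2\,d\mathcal{L}_d({\bf s}),
$$
so that only this limit needs to be evaluated. Here Hertz's asymptotics \eqref{eq_AsimpforFourier}--\eqref{eq_Herz} enter: since $\mathcal{D}$ is bounded, squaring gives $|\mathscr{F}[I_{\mathcal{K}}]({\bf y})|^2=|{\bf y}|^{-(d+1)}|\mathcal{D}({\bf y})|^2+o(|{\bf y}|^{-(d+1)})$, and expanding the modulus of the two-term expression~\eqref{eq_Herz},
$$
|\mathcal{D}({\bf y})|^2=\frac{1}{4\pi^2}\left(\frac{1}{\kappa({\bf y}^+)}+\frac{1}{\kappa({\bf y}^-)}\right)+\frac{1}{2\pi^2}\,\frac{\cos\bigl(2\pi\bigl(W({\bf y})+\tfrac{d+1}{4}\bigr)\bigr)}{\sqrt{\kappa({\bf y}^+)\kappa({\bf y}^-)}},
$$
where $W({\bf y})=|{\bf y}|\bigl(({\bf y}^+,{\bm\eta})-({\bf y}^-,{\bm\eta})\bigr)=h_{\mathcal{K}}({\bf y})+h_{\mathcal{K}}(-{\bf y})>0$ is the width of $\mathcal{K}$ in the direction of ${\bf y}$, $h_{\mathcal{K}}$ denoting the support function of $\mathcal{K}$. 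Putting ${\bf y}=p({\bf x}\odot{\bf s})$ — so that ${\bm\eta},\ {\bf y}^{\pm}$ and $\kappa({\bf y}^{\pm})$ are functions of ${\bf s}$ alone (homogeneous of degree~$0$), while $|{\bf y}|=p|{\bf x}\odot{\bf s}|$ and $W({\bf y})=pW({\bf x}\odot{\bf s})$ — the prefactor $(p|{\bf s}|)^{d+1}|{\bf y}|^{-(d+1)}=|{\bf s}|^{d+1}/|{\bf x}\odot{\bf s}|^{d+1}$ loses its $p$-dependence, and the integrand decomposes into three parts: a ``diagonal'' term
$$
\frac{1}{4\pi^2}\,\frac{|{\bf s}|^{d+1}}{|{\bf x}\odot{\bf s}|^{d+1}}\left(\frac{1}{\kappa\bigl(({\bf x}\odot{\bf s})^+\bigr)}+\frac{1}{\kappa\bigl(({\bf x}\odot{\bf s})^-\bigr)}\right),
$$
independent of $p$ and integrating precisely to the right-hand side; an oscillating ``cross'' term $g({\bf s})\cos\bigl(2\pi(pW({\bf x}\odot{\bf s})+\tfrac{d+1}{4})\bigr)$ with $g({\bf s})=\dfrac{|{\bf s}|^{d+1}}{2\pi^2|{\bf x}\odot{\bf s}|^{d+1}\sqrt{\kappa(({\bf x}\odot{\bf s})^+)\kappa(({\bf x}\odot{\bf s})^-)}}$; and an error term.

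The first routine ingredient is the bound $|{\bf x}\odot{\bf s}|\geq c\,|{\bf s}|$ on the sector $X_B$, with $c=c({\bf x},B,d)>0$ (the coordinates on $X_B$ are mutually comparable and ${\bf x}\neq{\bf0}$): it shows that $|{\bf s}|^{d+1}/|{\bf x}\odot{\bf s}|^{d+1}$ is bounded on $B({\bf0},1)\cap X_B$, so the right-hand side integral and the amplitude $g$ are bounded there and all integrals below are finite. The error term $(p|{\bf s}|)^{d+1}\bigl(|\mathscr{F}[I_{\mathcal{K}}](p({\bf x}\odot{\bf s}))|^2-|p({\bf x}\odot{\bf s})|^{-(d+1)}|\mathcal{D}(p({\bf x}\odot{\bf s}))|^2\bigr)$ tends to $0$ pointwise in ${\bf s}$ as $p\to\infty$ (the bracket is $o(|p({\bf x}\odot{\bf s})|^{-(d+1)})=o(p^{-(d+1)})$) and is dominated by $C|{\bf s}|^{d+1}/|{\bf x}\odot{\bf s}|^{d+1}$ by \eqref{eq_EstimationforFourier} and the boundedness of $\mathcal{D}$; hence its integral vanishes in the limit by dominated convergence. (Working with the dilated variable ${\bf s}$ has the side benefit of sidestepping any need for uniformity of the $o(\cdot)$ in \eqref{eq_AsimpforFourier} over directions.)

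The real content is that the cross term contributes nothing in the limit, i.e.
$$
\int_{B({\bf0},1)\cap X_B} g({\bf s})\cos\!\Bigl(2\pi\bigl(p\,W({\bf x}\odot{\bf s})+\tfrac{d+1}{4}\bigr)\Bigr)\,d\mathcal{L}_d({\bf s})\ \xrightarrow[p\to\infty]{}\ 0 .
$$
This is a non-stationary-phase (Riemann--Lebesgue-type) assertion. The key geometric fact is that $\Phi({\bf s}):=W({\bf x}\odot{\bf s})=h_{\mathcal{K}}({\bf x}\odot{\bf s})+h_{\mathcal{K}}(-({\bf x}\odot{\bf s}))$ is positive, homogeneous of degree~$1$, and $C^1$ on $X_B\setminus\{{\bf0}\}$ — here the strict convexity of $\mathcal{K}$ is used, as it makes $h_{\mathcal{K}}$ differentiable off the origin, and ${\bf x}\odot{\bf s}\neq{\bf0}$ there since ${\bf x}\neq{\bf0}$ — so Euler's identity gives $(\nabla_{\bf s}\Phi({\bf s}),{\bf s})=\Phi({\bf s})>0$, whence $\Phi$ has no critical point on $B({\bf0},1)\cap X_B\setminus\{{\bf0}\}$. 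I would first discard a small ball $B({\bf0},\varepsilon)$, whose contribution is at most $\|g\|_\infty\,\mathcal{L}_d(B({\bf0},\varepsilon)\cap X_B)$ uniformly in $p$ and hence negligible as $\varepsilon\to0$; on the compact remainder, where $g$ is bounded continuous and $|\nabla\Phi|$ is bounded below, a standard non-stationary-phase argument applies — a finite partition of unity, a local $C^1$ change of variables straightening $\Phi$ to a coordinate, and the one-dimensional Riemann--Lebesgue lemma (equivalently, one integration by parts after $L^1$-approximating $g$ by smooth functions, giving an $O(1/p)$ bound). Adding the three pieces yields the stated value of the limit.

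The main obstacle is precisely this oscillatory estimate: verifying that $\Phi({\bf s})=W({\bf x}\odot{\bf s})$ is genuinely $C^1$ with nowhere-vanishing gradient on the relevant region — the point at which strict convexity of $\mathcal{K}$ is essential and the argument would fail for, say, a polytope — together with handling the mild singularity of the amplitude $g$ at the origin; everything else is bookkeeping with the substitution ${\bm\tau}=p{\bf s}$ and dominated convergence.
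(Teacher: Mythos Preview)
Your argument is correct and follows the paper's scheme almost verbatim: rescale ${\bm\tau}=p{\bf s}$, expand via Hertz's asymptotics into diagonal + oscillatory + error, and remove the error by dominated convergence using \eqref{eq_EstimationforFourier} together with the sector bound $|{\bf x}\odot{\bf s}|\ge c|{\bf s}|$.

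The only real difference is how the oscillatory cross term is killed. You set up a general non-stationary-phase argument: $\Phi({\bf s})=W({\bf x}\odot{\bf s})$ is $C^1$ and homogeneous of degree~$1$, so by Euler's identity $\nabla\Phi\neq0$ away from the origin, and after excising a small ball you apply a partition of unity, straighten the phase, and invoke the one-dimensional Riemann--Lebesgue lemma. The paper short-circuits all of this by passing to spherical coordinates ${\bf s}=(\rho,{\bm\eta})$: the curvature factors and $|{\bf s}|^{d+1}/|{\bf x}\odot{\bf s}|^{d+1}$ are homogeneous of degree~$0$, hence depend on ${\bm\eta}$ alone, while the phase $h(p,{\bf x},{\bf s})$ is \emph{linear} in $\rho$. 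So the inner integral is just $\int_0^1 \rho^{d-1}\cos\bigl(p\rho\,a({\bm\eta})+\text{const}\bigr)\,d\rho$, which tends to $0$ by the one-dimensional Riemann--Lebesgue lemma (or a single integration by parts), and dominated convergence over ${\bm\eta}$ finishes. Your route would survive an inhomogeneous phase, but since you already observed the degree-$1$ homogeneity of $\Phi$, the paper's polar-coordinate trick is the natural shortcut here and avoids the partition-of-unity and coordinate-change machinery entirely.
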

\begin{proof}[Proof of Proposition~\ref{Pr_limit}]
Let's rewrite the integral by replacing the variables ${{\bm\tau}=p{\bf s}}:$
$$
\frac{1}{p^d}\int_{B({\bf0},p)\cap X_B}|{\bm\tau}|^{d+1}\left|\mathscr{F}[I_{\mathcal{K}}]({\bf x}\odot{\bm\tau})\right|^2\,d\mathcal{L}_d({\bm\tau})=\int_{B({\bf0},1)\cap X_B}p^{d+1}
|{\bf s}|^{d+1}\left|\mathscr{F}[I_{\mathcal{K}}]({\bf x}\odot p{\bf s})\right|^2\,d\mathcal{L}_d({\bf s}).
$$
Using~\eqref{eq_AsimpforFourier} we represent the integrand as the sum of two functions
\begin{equation}\label{eq_Asymp1}
p^{d+1}|{\bf s}|^{d+1}\left|\mathscr{F}[I_{\mathcal{K}}]({\bf x}\odot p{\bf s})\right|^2=
|{\bf s}|^{d+1}|\mathcal{D}(p({\bf x}\odot{\bf s}))|^2|{\bf x}\odot {\bf s}|^{-(d+1)}+T(p,{\bf x},{\bf s}),
\end{equation}
where
\begin{equation}\label{eq_Asymp2}
T(p,{\bf x}, {\bf s})=|{\bf s}|^{d+1}p^{d+1}\left(|t(p({\bf x}\odot{\bf s}))|^2+2|p({\bf x}\odot{\bf s})|^{-(d+1)/2}\mathrm{Re}\,(\mathcal{D}(p({\bf x}\odot{\bf s}))t(p({\bf x}\odot{\bf s}))\right),
\end{equation}
and ${t({\bf y})=o(|{\bf y}|^{-(d+1)/2}))}$ as ${|{\bf y}|\to\infty}.$
It is easy to see that ${{T(p,{\bf x}, {\bf s})}\to0}$ for ${p\to+\infty}.$ Moreover, it is  bounded by integrable function of the variable ${\bf s}.$ Therefore, applying Lebesgue's theorem on dominated convergence to the integral of $T,$ we obtain zero.

Taking into account equality~\eqref{eq_Herz} for the function $\mathcal{D}$ we obtain
\begin{equation}\label{eq_Asymp3}
|\mathcal{D}(p({\bf x}\odot{\bf s}))|^2=\frac{1}{4\pi^2}\left(\frac{1}{\kappa(({\bf x}\odot{\bf s})^+)}+\frac{1}{\kappa(({\bf x}\odot{\bf s})^-)}\right)+\frac{\cos2\pi h(p,{\bf x},{\bf s})}{2\pi^2\sqrt{\kappa(({\bf x}\odot{\bf s})^+)\kappa(({\bf x}\odot{\bf s})^-)}},
\end{equation} 
where ${h(p,{\bf x},{\bf s})=p\bigl(({\bf x}\odot{\bf s})^+-({\bf x}\odot{\bf s})^-,{\bf x}\odot{\bf s}\bigr)+\frac{d+1}{4}}.$
It remains to be noted that
$$
\lim_{p\to+\infty}\int_{B({\bf0},1)\cap X_B}\frac{|{\bf s}|^{d+1}}{|{\bf x}\odot{\bf s}|^{d+1}}\frac{\cos2\pi h(p,{\bf x},{\bf s})}{\sqrt{\kappa(({\bf x}\odot{\bf s})^+)\kappa(({\bf x}\odot{\bf s})^-)}}\,d\mathcal{L}_d({\bf s})=0.
$$
This is immediately visible if we switch to spherical coordinates: ${{\bf s}=(\rho,\eta)},$ where ${\rho=|{\bf s}|\in[0,1]},$ and ${\eta=\frac{{\bf s}}{|{\bf s}|}}.$ The inner integral over $\rho$ tends to zero as ${p\to\infty}$ according to the Riemann-Lebesgue lemma. And to the external integral over $\eta$ we again apply the Lebesgue theorem on dominated convergence.

Thus, Proposition~\ref{Pr_limit} is completely proved.
\end{proof}

Let us return to the inequality~\eqref{eq_Int&Lim}, having first estimated from below the resulting integral from Proposition~\ref{Pr_limit}. Considering that the curvature is bounded and the vector ${\bf s}$ is in the $B$-sector, for ${\bf x}\neq{\bf0}$ we obtain
\begin{multline*}
\frac{1}{4\pi^2}\int_{B({\bf0},1)\cap X_B}\frac{|{\bf s}|^{d+1}}{|{\bf x}\odot{\bf s}|^{d+1}}
\left(\frac{1}{\kappa(({\bf x}\odot{\bf s})^+)}+\frac{1}{\kappa(({\bf x}\odot{\bf s})^-)}\right)\,d\mathcal{L}_d({\bf s})\geq\\
\geq\frac{1}{2\pi^2\max\limits_{{\bf y}\in\partial\mathcal{K}}\kappa({\bf y})}\int_{B({\bf0},1)\cap X_B}\frac{|{\bf s}|^{d+1}d^{\frac{d+1}{2}}}{(B|{\bf s}||{\bf x}|)^{d+1}}\,d\mathcal{L}_d({\bf s})=\frac{d^{\frac{d+1}{2}}\mathcal{L}_d(B({\bf0},1)\cap X_B)}{2\pi^2\max\limits_{{\bf y}\in\partial\mathcal{K}}\kappa({\bf y})B^{d+1}|\bf x|^{d+1}}
\end{multline*}
Given this estimate and equality ${\sigma_h(\{{\bf 0}\})=0}$, the required statement follows from \eqref{eq_Int&Lim}:
$$
\int_{\mathbb{R}^d}\frac{d\sigma_h({\bf x})}{|{\bf x}|^{d+1}}\leq F\mathcal{L}^2_d(\mathcal{K})\frac{\pi^{d/2}}{\Gamma(d/2+1)}
\frac{2\pi^2\max\limits_{{\bf y}\in\partial\mathcal{K}}\kappa({\bf y})B^{d+1}}{d^{\frac{d+1}{2}}\mathcal{L}_d(B({\bf0},1)\cap X_B)}.
$$
Theorem~\ref{Th_Criterion2} is proved.
\end{proof}

Note that when the sector $X_B$ expands, i.e., when $B\to+\infty,$ the constant on the right-hand side of the resulting inequality goes to infinity. It is still unclear whether the condition of belonging to the sector when ${{t_1,...,t_d}\to+\infty}$ is essential or technical.

\section{Proof of Theorem~\ref{Th_Criterion3}}

Assume that ${I_{\mathcal{K}}({\bf t})=\mathcal{O}(\varphi({\bf t}))}$ as ${t_1,...,t_d\to\infty},$ where $\varphi$ is a positive homogeneous function of degree ${\theta<-(d+1)}.$ Taking ${\bf t}=p{\bf s}$ for some fixed unit vector ${\bf s}$ with positive coordinates as ${p\to+\infty}$ we obtain
$$
|{\bf t}|^{d+1}I_{\mathcal{K}}({\bf t})=|{{\bf t}}|^{\theta+d+1}\mathcal{O}\left(\varphi\left(\frac{t_1}{|t|},...,\frac{t_d}{|t|}\right)\right)=p^{\theta+d+1}\mathcal{O}\left(\varphi\left(s_1,...,s_d\right)\right)=o(1).
$$
It yields 
$$
\lim_{p\to+\infty}\int_{\mathbb{R}^d\setminus\{{\bf0}\}}p^{d+1}\left|\mathscr{F}[I_{\mathcal{K}}]({\bf x}\odot p{\bf s})\right|^2\,d\sigma_h({\bf x})=0.
$$
On the other hand, we can calculate this limit by taking into account~\eqref{eq_Asymp1}, \eqref{eq_Asymp2} and \eqref{eq_Asymp3}, where now ${\bf s}$ is a fixed unit vector, and the variable is ${\bf x}.$
We have
$$
\lim_{p\to+\infty}\int_{\mathbb{R}^d\setminus\{{\bf0}\}}|\mathcal{D}(p({\bf x}\odot{\bf s}))|^2|{\bf x}\odot {\bf s}|^{-(d+1)}\,d\sigma_h({\bf x})+\lim_{p\to+\infty}\int_{\mathbb{R}^d\setminus\{{\bf0}\}}T(p,{\bf x},{\bf s})\,d\sigma_h({\bf x})=0.
$$
The second limit is zero by Lebesgue's theorem on dominated convergence, since, as noted above, ${T(p,{\bf x},{\bf s})\to0}$ for ${p\to+\infty}$, and there is an integrable majorant. Namely, we know from Theorem~\ref{Th_Criterion2} that ${\int_{\mathbb{R}^d}\frac{d\sigma_h}{|{\bf x}|^{d+1}}<\infty},$ and
$$
|T(p,{\bf x},{\bf s})|\leq\frac{C}{|{\bf x}\odot{\bf s}|^{d+1}}\leq\frac{C}{(\min_k s_k|{\bf x}|)^{d+1}}.
$$
For the first limit using~\eqref{eq_Asymp3} we obtain
$$
\lim_{p\to\infty}\int\limits_{\mathbb{R}^d\setminus\{{\bf0}\}}
\left(\left(\frac{1}{\sqrt{\kappa(({\bf x}\odot{\bf s})^+)}}-\frac{1}{\sqrt{\kappa(({\bf x}\odot{\bf s})^-)}}\right)^2+\frac{4\cos^2\pi h(p,{\bf x},{\bf s})}{\sqrt{\kappa(({\bf x}\odot{\bf s})^+)\kappa(({\bf x}\odot{\bf s})^-)}}\right)\,\frac{d\sigma_h({\bf{x}})}{|{\bf x}\odot{\bf s}|^{d+1}}=0.
$$
It follows that
$$
\lim_{p\to+\infty}\int_{\mathbb{R}^d\setminus\{{\bf0}\}}\frac{\cos^2\pi h(p,{\bf x},{\bf s})}{\sqrt{\kappa(({\bf x}\odot{\bf s})^+)\kappa(({\bf x}\odot{\bf s})^-)}}\,\frac{d\sigma_h({\bf{x}})}{|{\bf x}\odot{\bf s}|^{d+1}}=0.
$$
Introducing on ${\mathbb{R}^d\setminus\{{\bf0}\}}$ a finite Borel measure 
$$
d\nu({\bf x})=\frac{d\sigma_h({\bf{x}})}{\sqrt{\kappa(({\bf x}\odot{\bf s})^+)\kappa(({\bf x}\odot{\bf s})^-)}|{\bf x}\odot{\bf s}|^{d+1}},
$$ 
we obtain that ${\cos(\pi pa({\bf x})+\pi(d+1)/4)}$ tends to zero as ${p\to+\infty}$ in the norm of the space ${L_2(\nu)},$  
where
$$
a({\bf x})=(({\bf x}\odot{\bf s})^+-({\bf x}\odot{\bf s})^-,{\bf x}\odot{\bf s})=|{\bf x}\odot{\bf s}|(\max_{{\bf y}\in\partial\mathcal{K}}({\bf y},\bm\eta)-\min_{{\bf y}\in\partial\mathcal{K}}({\bf y},\bm\eta))>0.
$$
Using similar arguments as in Lemma~4 of~\cite{KPT23}, we obtain that the measure $\nu$ and, consequently, $\sigma_h$ is zero. Theorem~\ref{Th_Criterion3} is proved.

\section{Comparison of estimates for classical ergodic means}

As we see, for ergodic averages constructed over a strictly convex set (with a sufficiently smooth boundary), the convergence rate estimates do not depend on the specific type of set. Therefore, we consider averages constructed over the unit ball and compare the results from~\cite{Po24} for averages constructed over the cube. For such averages, neighborhoods of zero in the form of parallelepipeds were considered
$$
\Pi({\bf t}^{-1})=\{x\in\mathbb{R}^d: -1<t_kx_k\leq1\}, \ {\bf t}>\bf0.
$$
The asymptotics of the spectral measure on such sets is equivalent to the asymptotics on the ellipsoids $\mathcal{E}({\bf t}^{-1}).$ Let us consider the scale of a power singularity, i.e.,
\begin{equation}\label{eq_PowerEstimate}
\sigma_h(\mathcal{E}({\bf t}^{-1}))\asymp\sigma_h(\Pi({\bf t}^{-1}))=\mathcal{O}({\bf t}^{-\bm \alpha}),
\end{equation}
where ${\bm\alpha=(\alpha_1,...,\alpha_d)>\bf 0}.$ It is clear that the function $\varphi({\bf t})={\bf t}^{-\bm \alpha}=t_1^{-\alpha_1}\cdots t_d^{-\alpha_d}$ is homogeneous of degree ${\theta=-(\alpha_1+...+\alpha_d)}.$

From Proposition~\ref{Propos_Estimates} and Theorem~2 from~\cite{Po24} we obtain the following estimates for the integral
$I_\square({\bf t})$ constructed over a cube and for $I_\bigcirc({\bf t})$ constructed over a ball.
For the parametric vector $\bm{\alpha}$ we denote by $\bm{\alpha}^*=(\alpha^*_1,...,\alpha^*_d)$ the permutation in increasing coordinates~$\bm{\alpha},$ i.e., ${\alpha^*_1\leq\alpha^*_2\leq ... \leq\alpha^*_d}.$
Among the successive differences ${a^*_k-a^*_{k+1}}$ for ${1\leq k \leq d-1}$
let $r=r(\bm{\alpha})$ be the number of zero differences, and
${m=m(\bm{\alpha})=\max\limits_{1\leq k\leq d}\alpha_k=\alpha^*_d}.$

We have
\begin{center}
	\begin{tabular}{|l|c|c|c|}\hline
		& ${m\in[0,2)}$&${m=2}$& $m>2$\\ \hline 
		$I_{\square}({\bf t})$ & ${\mathcal{O}({\bf t}^{-\bm\alpha})}$& ${\mathcal{O}({\bf t}^{-\bm\alpha}\ln^{r+1}({\bf t}^{\bm\alpha}))}$& ${\mathcal{O}({\bf t}^{-2{\bm\alpha}/m}\ln^{r}({\bf t}^{\bm\alpha}))}$\\ \hline
	\end{tabular}
\end{center}
\begin{center}
	\begin{tabular}{|l|c|c|r|}\hline
		& ${\theta\in(-(d+1),0]}$&${\theta=-(d+1)}$& $\theta<-(d+1)$\\ \hline
		$I_{\bigcirc}({\bf t})$ & ${\mathcal{O}({\bf t}^{-\bm\alpha})}$& ${\mathcal{O}({\bf t}^{-\bm\alpha}\ln({\bf t}^{\bm\alpha}))}$& ${\mathcal{O}({\bf t}^{{\bm\alpha}(d+1)/\theta})}$\\ \hline
	\end{tabular}
\end{center}

Already in the case of dimension ${d=2}$ interesting pictures arise: the parameter domain for
$I_{\square}({\bf t})$ is divided into 5 sets, and for $I_{\bigcirc}({\bf t})$ into three (see Figure~1).
\begin{figure}
	\begin{center}
		\includegraphics[scale=0.6]{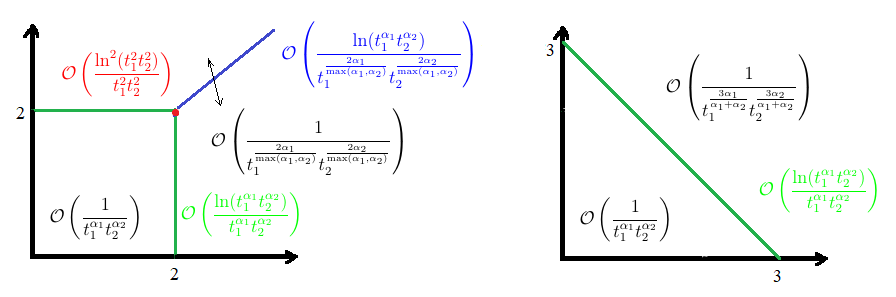} 
		{\caption{\small Partitioning the parameter domain due to estimates for $I_\square$ (left) and for $I_\bigcirc$ (right)}}
	\end{center}
\end{figure}
\medskip
By comparing the estimates, we can identify areas of parameters where the rate is better for $I_{\square},$ and where for $I_{\bigcirc}$ (see Figure~2).

\begin{figure}
	\begin{center}
		\includegraphics[scale=0.6]{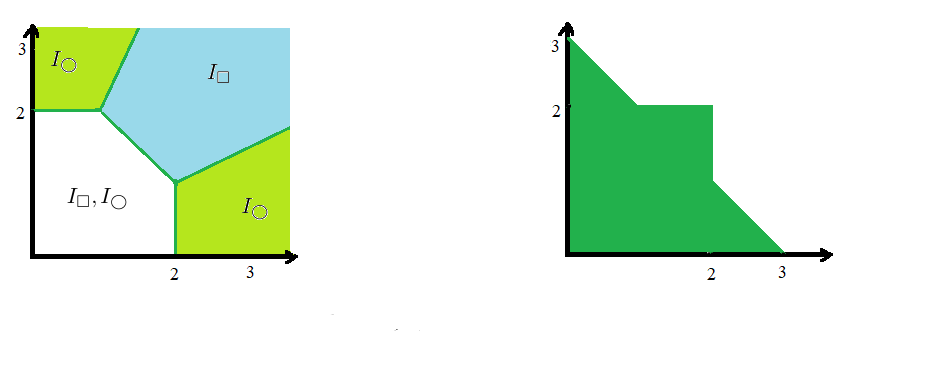} 
		{\caption{\small Partitioning of the parameter domain according to the choice of the best estimate between $I_\square$ and $I_\bigcirc$ (left); the set containing in the set of power estimates (right)}}
	\end{center}
\end{figure}

For a fixed vector $h\in\mathcal{H}$ with a power spectral asymptotics for neighborhoods of zero of the form~\eqref{eq_PowerEstimate}, we call the set of parameters \textit{a set of power estimates} if there exists~$\mathcal{K}\subset\mathbb{R}^d$ such that $I_\mathcal{K}({\bf t})=\mathcal{O}({\bf t}^{-\bm\alpha}).$ It is interesting to find out how such a set and its boundary are structured. Combining the power estimates for classical average of unitary actions of the group~$\mathbb{R}^2$, we see that the set of power estimates contains the green region (see Figure 2).

\vspace{1mm}
{\bf Conflict of Interest} {\rm The authors declare no conflict of interest.}
\par\vspace{1mm}




 {Russia, Novosibirsk, Sobolev Institute of Mathematics\\
E-mail\,$:$ ipodvigin@math.nsc.ru}

\end{document}